\newtheorem{theorem}{Theorem}[section]
\newtheorem{lemma}[theorem]{Lemma}
\theoremstyle{definition}
\newtheorem{remark}[theorem]{Remark}
\numberwithin{equation}{section}
\newcommand{\mC}{\ensuremath{\mathbb{C}}}
\begin{document}

\title{A value distribution result related to Hayman's alternative}
\author[K. S. Charak]{Kuldeep Singh Charak}
\address{
\begin{tabular}{lll}
&Kuldeep Singh Charak\\
&Department of Mathematics\\
&University of Jammu\\
&Jammu-180 006\\ 
&India\\
\end{tabular}}
\email{kscharak7@rediffmail.com}

\author[A. Singh]{Anil Singh}
\address{
\begin{tabular}{lll}
&Anil Singh\\
&Department of Mathematics\\
&University of Jammu\\
&Jammu-180 006\\
&India
\end{tabular}}
\email{anilmanhasfeb90@gmail.com }

\begin{abstract}
Motivated by Bloch's Principle, we prove a value distribution result for meromorphic functions which is related to Hayman's alternative in certain sense.
\end{abstract}

\renewcommand{\thefootnote}{\fnsymbol{footnote}}
\footnotetext{2010 {\it Mathematics Subject Classification}.  30D35, 30D45.}
\footnotetext{{\it Keywords and phrases}.  Meromorphic function, Value distribution theory, Normal families, Bloch's principle.}

\maketitle

 \section{Introduction and Main Result}
The reader is assumed to be familiar with the standard notations of Nevanlinna  value distribution theory of meromorphic functions (one may refer to \cite{ cherry, Hayman-1}) such as $T\left(r,f\right)$, $m\left(r,f\right)$, $N\left(r,f\right),$ etc. We shall denote the class of all meromorphic functions on a domain $D$ in $\mC$ by $\mathcal{M}(D)$ and we shall write, `$\left\langle f,D\right\rangle\in\mathcal{P}$' for 
`$f\in \mathcal{M}(D)$ satisfies the property $\mathcal{P}$ on $D$' .

   We say that $\phi \in \mathcal{M}(\mC)$ is a small function of  $f \in \mathcal{M}(\mC)$ if $T(r,\phi)=S(r,f)$ as $r\to \infty$ possibly outside a set of $r$ of finite linear measure.

\medskip

    W.K. Hayman  proved the following `Picard type' theorem, also known as Hayman's alternative: 
 
 \begin{theorem}\label{alternative} \cite{Hayman-2} Let $f\in \mathcal{M}(\mC)$ and let $l\geq 1.$ Suppose that $f(z)\neq 0, \mbox{ and } f^{(l)}(z)-1\neq 0$ for all $z\in\mathbb{C}.$  Then $f$ is constant.
\end{theorem}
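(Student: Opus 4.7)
I would prove the contrapositive via a Nevanlinna growth estimate: assume $f$ is non-constant meromorphic on $\mathbb{C}$ with $f\neq 0$ and $f^{(l)}\neq 1$ everywhere, and derive $T(r,f)\le S(r,f)$, an impossibility for non-constant $f$.

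First I translate the pointwise hypotheses into counting-function identities, $N(r,1/f)=0$ and $N(r,1/(f^{(l)}-1))=0$, so that the first fundamental theorem gives $T(r,f)=m(r,1/f)+O(1)$. Decomposing $\tfrac{1}{f}=\tfrac{f^{(l)}}{f}\cdot\tfrac{1}{f^{(l)}}$ and applying the lemma on the logarithmic derivative to the ratio $f^{(l)}/f$, I obtain
\[
T(r,f)\;\le\; m\!\left(r,\tfrac{1}{f^{(l)}}\right)+S(r,f)\;=\;T(r,f^{(l)})-N\!\left(r,1/f^{(l)}\right)+S(r,f).
\]

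Next I apply the sharp form of Nevanlinna's second fundamental theorem (with the ramification counting function $N_{1}$) to $g=f^{(l)}$ with the three target values $0$, $1$, $\infty$; the hypothesis eliminates the $1$-point term. Chaining with the previous display and using $\overline{N}(r,g)=\overline{N}(r,f)$ together with $S(r,g)=S(r,f)$, the bound collapses to
\[
T(r,f)\;\le\;\overline{N}(r,f)-N_{1}(r,g)+S(r,f).
\]

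The main obstacle, and the only place where real computation is needed, is to show that $N_{1}(r,g)$ dominates $\overline{N}(r,f)$ strongly enough. This rests on the multiplicity gain at a pole of $f$ of order $m$: there $g=f^{(l)}$ has a pole of order $m+l$ and $g'$ a pole of order $m+l+1$, contributing $2(m+l)-(m+l+1)=m+l-1$ to $N_{1}(r,g)$. Summing over all poles in $|z|\le r$ yields
\[
N_{1}(r,g)\;\ge\;N(r,f)+(l-1)\overline{N}(r,f),
\]
which combined with $\overline{N}(r,f)\le N(r,f)$ and $l\ge 1$ turns the previous display into $T(r,f)\le S(r,f)$, the desired contradiction.
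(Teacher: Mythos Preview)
Your chain breaks at the second display. There is no form of the second fundamental theorem that simultaneously carries the reduced counting function $\overline{N}(r,g)$ for poles \emph{and} subtracts the full ramification term $N_{1}(r,g)$. The sharp SFT reads
\[
T(r,g)\le N(r,g)+N\!\Big(r,\tfrac{1}{g}\Big)+N\!\Big(r,\tfrac{1}{g-1}\Big)-N_{1}(r,g)+S(r,g),
\]
and the passage to $\overline{N}$--terms uses up exactly the pole part of $N_{1}$: one has the identity $N(r,g)-\bigl(2N(r,g)-N(r,g')\bigr)=\overline{N}(r,g)$. Consequently, after chaining with $T(r,f)\le T(r,g)-N(r,1/g)+S(r,f)$ and using $N(r,1/(g-1))=0$, the honest outcome is
\[
T(r,f)\;\le\; N(r,g)-N_{1}(r,g)+S(r,f)\;=\;\overline{N}(r,f)-N\!\Big(r,\tfrac{1}{g'}\Big)+S(r,f)\;\le\;\overline{N}(r,f)+S(r,f),
\]
which is \emph{not} a contradiction. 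Your pole computation ``contribution $m+l-1$ to $N_{1}(r,g)$'' is correct, but at the same pole $N(r,g)$ contributes $m+l$, so the net saving is only $1$ per pole---precisely $\overline{N}(r,f)$---and nothing further. (A quick sanity check: $g(z)=z^{-n}$ already violates the inequality $T(r,g)\le \overline{N}(r,g)+N(r,1/g)+N(r,1/(g-1))-N_{1}(r,g)+S(r,g)$ for $n\ge 3$.)

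The missing step---bounding $\overline{N}(r,f)$, in particular the simple poles of $f$---is the real content of Hayman's theorem. The paper does not prove Theorem~\ref{alternative} (it is cited from \cite{Hayman-2}), but its Lemmas~\ref{lemma3.1A}--\ref{final lemmaA} reproduce Hayman's mechanism in the small-function setting: one introduces the auxiliary function $G=(\psi')^{l+1}/(\phi-\psi)^{l+2}$, observes that at a \emph{simple} pole of $f$ the function $G$ is regular and $G'$ vanishes to order $\ge l$, and converts this into $lN_{1}(r,f)\le \bar N_{2}(r,f)+\bar N(r,1/(\psi-\phi))+\cdots+S(r,f)$. Specialising $\phi\equiv 1$ then closes the estimate. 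Your argument needs a device of this kind; the second fundamental theorem alone does not control the poles of $f$ well enough.
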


\medskip

A subfamily $\mathcal{F}$ of $\mathcal{M}(D)$ is said to be normal in $D$ if every sequence of members of $\mathcal{F}$ contains a subsequence that converges locally uniformly (w.r.t. the spherical metric) in $D$. Recall  Bloch's Principle (see\cite{rubel,Schiff}): {\it  A subfamily $\mathcal{F}$ of $\mathcal{M}(D)$ with  $\left\langle f,D\right\rangle\in\mathcal{P}$ for each $f\in \mathcal{F}$ is likely to be normal on $D$ if $\mathcal{P}$ reduces every $f\in \mathcal{M}(\mC)$ to a constant}. Neither Bloch's Principle nor its converse is true (see \cite{charak1,charak2,charak3,lahiri,rubel}).

    According to Bloch's Principle, to every `Picard type' theorem  there corresponds a normality criterion. A normality criterion corresponding to  Theorem \ref{alternative} was proved by Y.Gu  as follows:

 \begin{theorem}\label{Gu} \cite{Gu} Let $\mathcal{F} \subseteq  \mathcal{M}(D)$  and let $l\geq 1.$ Suppose that $f(z)\neq 0, \mbox{ and }f^{(l)}(z)-1\neq 0$ for all $z\in D$ and $f\in\mathcal{F}.$  Then $\mathcal{F}$ is normal in $D.$ 
\end{theorem}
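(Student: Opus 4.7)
The plan is to argue by contradiction via a rescaling argument, reducing normality to Theorem~\ref{alternative}. Suppose $\mathcal{F}$ fails to be normal at some point $z_0 \in D$. Since every $f \in \mathcal{F}$ is zero-free, Pang's refinement of Zalcman's lemma may be applied with scaling exponent $l$: one can extract sequences $f_n \in \mathcal{F}$, $z_n \to z_0$ in $D$, and $\rho_n \to 0^+$ such that
\begin{equation*}
g_n(\zeta) := \rho_n^{-l}\, f_n(z_n + \rho_n\zeta)
\end{equation*}
converges spherically and locally uniformly on $\mC$ to a non-constant meromorphic function $g$ on $\mC$ (necessarily of finite order).

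Next I would pull the two hypotheses on $f$ across to the limit. The zero-freeness of $f_n$ makes each $g_n$ zero-free on its disc of definition, so Hurwitz's theorem, together with the non-constancy of $g$, forces $g(\zeta) \neq 0$ for every $\zeta \in \mC$. A direct differentiation yields $g_n^{(l)}(\zeta) = f_n^{(l)}(z_n + \rho_n\zeta)$, so $g_n^{(l)} - 1$ is zero-free on the domain of $g_n$; Hurwitz once more gives the dichotomy $g^{(l)} \equiv 1$ or $g^{(l)}(\zeta) \neq 1$ for every $\zeta \in \mC$. The former would force $g$ to be entire (since $g^{(l)}\equiv 1$ is pole-free, so is $g$) and in fact a polynomial of exact degree $l \geq 1$; by the fundamental theorem of algebra such a polynomial has a zero, contradicting $g \neq 0$. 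Hence $g^{(l)}(\zeta) \neq 1$ everywhere on $\mC$.

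At this point $g$ is a non-constant meromorphic function on $\mC$ satisfying $g \neq 0$ and $g^{(l)} - 1 \neq 0$, which directly contradicts Theorem~\ref{alternative}; this completes the argument. The step I expect to be the main obstacle is the initial rescaling with scaling exponent $-l$ rather than the exponent $0$ of the classical Zalcman lemma: one has to invoke Pang's generalization and verify that the multiplicity conditions in that lemma do not preclude this choice---which in our setting is immediate because the members of $\mathcal{F}$ have no zeros at all. Once the rescaled limit $g$ is in hand, the remainder of the argument is a routine sequence of Hurwitz applications capped by an appeal to Hayman's alternative.
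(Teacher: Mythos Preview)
The paper does not give its own proof of this statement: Theorem~\ref{Gu} is quoted as a known result of Gu (with the citation \cite{Gu}) and is used only as background motivation. So there is no in-paper argument to compare against.

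Your argument is nonetheless correct and is, in fact, the standard modern proof of Gu's criterion via Zalcman--Pang rescaling. The key technical point you flag---that the scaling exponent $\alpha=l$ is admissible---is justified exactly as you say: Pang's version of the lemma allows $-1<\alpha<k$ whenever all zeros of the family have multiplicity at least $k$, and since the members of $\mathcal{F}$ are zero-free this constraint is vacuous and any positive $\alpha$ is permitted. The chain rule identity $g_n^{(l)}(\zeta)=f_n^{(l)}(z_n+\rho_n\zeta)$ then lets Hurwitz transfer both omitted-value conditions to the limit, and your elimination of the branch $g^{(l)}\equiv 1$ via the fundamental theorem of algebra is clean. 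It may be worth noting, for historical context, that Gu's original 1979 proof predates Pang's 1989 extension of Zalcman's lemma and therefore proceeded by entirely different (Nevanlinna-theoretic) methods; the rescaling route you take is shorter and is the one found in most contemporary treatments.
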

  
	The constants $0 \mbox{ and }1$ in Theorem \ref{alternative} and Theorem \ref{Gu} can be replaced by arbitrary constants $a \mbox{ and }b\neq 0$:
\begin{theorem} \cite{Hayman-2}\label{alternative1} Let $f \in  \mathcal{M}(D)$  and let $l\geq 1.$ Suppose that $f(z)\neq a, \mbox{ and }f^{(l)}(z)-b\neq 0$ for all $z\in\mathbb{C},$ where $a,b\in\mathbb{C},~b\neq 0.$ Then $f$ is constant.
\end{theorem}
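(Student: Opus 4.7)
The plan is to reduce Theorem \ref{alternative1} to Theorem \ref{alternative} by a simple affine change of the dependent variable that simultaneously normalizes the two exceptional values $a$ and $b$ to the canonical pair $0$ and $1$.

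First, given $f\in\mathcal{M}(\mC)$ satisfying the hypotheses of Theorem \ref{alternative1}, I would introduce the auxiliary function
\[
g(z)=\frac{f(z)-a}{b}.
\]
Since $a\in\mC$ and $b\in\mC\setminus\{0\}$, $g$ belongs to $\mathcal{M}(\mC)$, and constancy of $g$ is equivalent to constancy of $f$. The first exceptional condition transfers immediately: $f(z)\neq a$ on $\mC$ forces $g(z)\neq 0$ on $\mC$.

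Next I would check the derivative condition. Because $a/b$ is a constant and $l\geq 1$, we have $g^{(l)}(z)=f^{(l)}(z)/b$ for every $z\in\mC$. The assumption $f^{(l)}(z)\neq b$ therefore translates into $g^{(l)}(z)\neq 1$ on $\mC$. Thus $g$ satisfies exactly the hypotheses of Theorem \ref{alternative}, and we conclude that $g$ is constant, whence so is $f$.

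There is no real obstacle here; the only point requiring a moment of care is ensuring that the affine transformation respects derivatives of all orders $l\geq 1$, which is guaranteed by linearity of differentiation and the fact that the additive shift $-a/b$ disappears under a single (and hence any) differentiation. No further machinery from Nevanlinna theory is needed, so the whole argument is a one-line reduction.
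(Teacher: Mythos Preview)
Your reduction is correct: the affine change $g=(f-a)/b$ sends the exceptional pair $(a,b)$ to $(0,1)$, preserves membership in $\mathcal{M}(\mC)$ and constancy, and since $(a/b)^{(l)}=0$ for $l\geq 1$ the derivative condition transfers exactly, so Theorem~\ref{alternative} applies. The paper itself does not supply a proof of Theorem~\ref{alternative1}; it simply quotes the result from \cite{Hayman-2}, and your one-line reduction is precisely the intended way to see that the general $(a,b)$ case follows from the normalized $(0,1)$ case.
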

\begin{theorem}\cite{Gu}\label{Gu1} Let $\mathcal{F} \subseteq  \mathcal{M}(D)$  and let $l\geq 1.$ Suppose that $f(z)\neq a, \mbox{ and }f^{(l)}(z)-b\neq 0$ for all $z\in D,$  $f\in\mathcal{F}$, where  $a,b\in\mathbb{C}$ with $~b\neq 0.$ Then $\mathcal{F}$ is normal in $D.$ 
\end{theorem}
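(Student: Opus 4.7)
The plan is to reduce Theorem \ref{Gu1} directly to the already-established Theorem \ref{Gu} by an affine change of variables in the range. For each $f\in\mathcal{F}$, set
\[
g(z)=\frac{f(z)-a}{b}, \qquad z\in D,
\]
and let $\mathcal{G}=\{g:f\in\mathcal{F}\}\subseteq\mathcal{M}(D)$.

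Next I would verify that the family $\mathcal{G}$ satisfies the hypotheses of Theorem \ref{Gu}. Since $b\neq 0$, the condition $f(z)\neq a$ on $D$ is equivalent to $g(z)\neq 0$ on $D$. Differentiating $l$ times gives $g^{(l)}(z)=f^{(l)}(z)/b$, so $f^{(l)}(z)\neq b$ on $D$ is equivalent to $g^{(l)}(z)\neq 1$ on $D$. Thus every $g\in\mathcal{G}$ is meromorphic on $D$ with $g(z)\neq 0$ and $g^{(l)}(z)-1\neq 0$, and Theorem \ref{Gu} applies to yield that $\mathcal{G}$ is normal in $D$.

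Finally, I would invoke the standard fact that normality (with respect to the spherical metric) is preserved under a non-degenerate affine transformation of the target: the map $w\mapsto bw+a$ is a M\"obius transformation of $\h{\mC}$, hence a spherical isometry up to a bi-Lipschitz factor, and the relation $f=bg+a$ shows that any spherically locally uniformly convergent subsequence of $\{g_n\}\subseteq\mathcal{G}$ produces a spherically locally uniformly convergent subsequence of the corresponding $\{f_n\}\subseteq\mathcal{F}$. Consequently $\mathcal{F}$ is normal in $D$.

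There is no real obstacle here: the result is a cosmetic generalization of Theorem \ref{Gu}, and the whole argument is a one-line substitution together with the invariance of normality under affine transformations. The only point requiring a word of care is this last invariance, which is why I would state it explicitly rather than leave it implicit.
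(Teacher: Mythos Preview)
Your argument is correct. The paper does not give its own proof of this statement: Theorem~\ref{Gu1} is simply quoted from \cite{Gu}, preceded only by the remark that ``the constants $0$ and $1$ in Theorem~\ref{alternative} and Theorem~\ref{Gu} can be replaced by arbitrary constants $a$ and $b\neq 0$.'' Your affine substitution $g=(f-a)/b$ is exactly that replacement made explicit, so your proposal is in line with what the paper takes for granted.
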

Note that if $l\geq 1$ and $b\in\mathbb{C}\setminus\left\{0\right\}$, then there is a polynomial $P(z)$ such that $P^{(l)}(z)=b.$ Using this observation, Theorem	\ref{alternative1} and Theorem \ref{Gu1} can be restated as:
\begin{theorem}\label{alternative2} Suppose that $P(z)$ is a polynomial of degree  $l\geq 1$ and $a\in\mathbb{C}.$  If $f\in \mathcal{M}(\mC)$ is such that  $f(z)\neq a \mbox{ and }\left(f(z)-P(z)\right)^{(l)}\neq 0,$   then $f$ is constant.
\end{theorem}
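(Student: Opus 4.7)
The plan is to derive Theorem \ref{alternative2} as an immediate corollary of Theorem \ref{alternative1}, exploiting the remark made by the authors just above the statement. The key observation is that the hypothesis on $P$ (degree exactly $l$) is precisely what guarantees that $P^{(l)}$ reproduces the nonzero constant $b$ appearing in Theorem \ref{alternative1}.

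First I would write $P(z) = c_l z^l + c_{l-1} z^{l-1} + \cdots + c_0$ with $c_l \neq 0$ (since $\deg P = l$), so that $P^{(l)}(z) = l!\, c_l =: b$, a nonzero complex constant. Next, using linearity of differentiation, I would rewrite
\[
\bigl(f(z)-P(z)\bigr)^{(l)} = f^{(l)}(z) - P^{(l)}(z) = f^{(l)}(z) - b.
\]
Hence the assumption $(f-P)^{(l)}(z) \neq 0$ is equivalent to $f^{(l)}(z) \neq b$ on $\mathbb{C}$, while by hypothesis $f(z) \neq a$ on $\mathbb{C}$ as well.

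At this point the hypotheses of Theorem \ref{alternative1} are satisfied with the pair $(a,b)$ and $b \neq 0$, so that theorem yields directly that $f$ is constant. There is no real obstacle here; the only point requiring care is verifying that $b \neq 0$, which is the sole reason the assumption $\deg P = l$ (not merely $\deg P \leq l$) is needed. If one allowed $\deg P < l$, then $P^{(l)} \equiv 0$ and the equivalence above would force $f^{(l)} \neq 0$, reducing to a weaker (in fact false, in general) version outside the scope of Hayman's alternative.
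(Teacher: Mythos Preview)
Your argument is correct and is exactly the restatement the paper intends: it presents Theorem~\ref{alternative2} not with a separate proof but as an immediate reformulation of Theorem~\ref{alternative1} via the observation that $P^{(l)}$ is a nonzero constant when $\deg P = l$. Your additional remark on why the degree must be exactly $l$ is accurate and a nice clarification.
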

\begin{theorem}\label{Gu2}Suppose that $P(z)$ is a polynomial of degree $l\geq 1$ and $a\in\mathbb{C}.$ If $\mathcal{F} \subseteq  \mathcal M(D)$ is such that each $f\in\mathcal{F}$ satisfies: $$f(z)\neq a\mbox{ and } \left(f(z)-P(z)\right)^{(l)}\neq 0,$$  then $\mathcal{F}$ is normal in $D.$
\end{theorem}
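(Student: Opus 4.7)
\textbf{Proof plan for Theorem \ref{Gu2}.} The strategy is to reduce the statement directly to Theorem \ref{Gu1} by exploiting the observation, already made in the paragraph preceding Theorem \ref{alternative2}, that the $l$-th derivative of a degree-$l$ polynomial is a nonzero constant. So the plan is simply to unpack the hypothesis and recognize it as the hypothesis of Theorem \ref{Gu1}.

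First I write $P(z) = c_l z^l + c_{l-1} z^{l-1} + \cdots + c_0$ with $c_l \neq 0$, and set $b := l!\, c_l$, so that $P^{(l)}(z) \equiv b$ on $\mathbb{C}$ and $b \in \mathbb{C} \setminus \{0\}$. Then for every $f \in \mathcal{F}$ and every $z \in D$,
\[
\bigl(f(z) - P(z)\bigr)^{(l)} = f^{(l)}(z) - P^{(l)}(z) = f^{(l)}(z) - b.
\]
Hence the assumption $(f-P)^{(l)} \neq 0$ on $D$ is equivalent to $f^{(l)}(z) - b \neq 0$ on $D$, and together with $f(z) \neq a$ on $D$ this is exactly the hypothesis of Theorem \ref{Gu1}, applied with the same constant $a$ and with the nonzero constant $b = l!\, c_l$.

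Applying Theorem \ref{Gu1} then yields normality of $\mathcal{F}$ in $D$, completing the proof. There is no genuine obstacle here: the substantive content lies entirely in Gu's theorem (Theorem \ref{Gu1}), and the present statement is a purely formal rephrasing, parallel to the way Theorem \ref{alternative2} was obtained from Theorem \ref{alternative1}. The only point to be careful about is noting that $\deg P = l$ (not merely $\deg P \leq l$) is used to ensure $b \neq 0$, which is what makes Theorem \ref{Gu1} applicable.
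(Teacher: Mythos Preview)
Your proof is correct and is exactly the paper's approach: the paper presents Theorem \ref{Gu2} as a direct restatement of Theorem \ref{Gu1} via the observation that $P^{(l)}(z)\equiv l!\,c_l=:b\neq 0$, with no further argument given. Your write-up makes this reduction explicit, including the point that $\deg P=l$ is what guarantees $b\neq 0$.
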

\begin{remark}\label{remark} {\it Put $g=f-P\mbox{ and } R=Q-P$, where $P\mbox{ and }Q$ are polynomials with $deg(P-Q)=deg(Q)=l$ and $Q$ is non-constant. If $f(z)-P(z)\neq 0$ and $\left(f(z)-Q(z)\right)^{(l)}\neq 0,$ then by using Theorem \ref{alternative2}, we find that $f(z)=P(z)+c$, for some constant $c\neq 0.$}
\end{remark}
Remark \ref{remark} shows that Theorem \ref{alternative1} does not hold if $a$ is replaced by some non-constant function.
\begin{remark}
{\it Suppose  $f$ is an entire function such that $f-g$ has only finitely many zeros in the plane, where $g$ is some non-constant entire function. Further, let $$F(z)=\sum_{k=1}^{n}a_k(z)\left(f-g\right)^{(k)}$$ omits $1,$ where $a_k(z)$ are small functions of $f.$ Then by using Theorem 3.2 in \cite{Hayman-1}, we find that  
 $f(z)=g(z)+p(z)$, for some polynomial $p(z).$ Indeed,  
$$T(r,f-g)< \overline{N}(r,f-g)+N(r, {1\over f-g})+\overline{N}\left(r,{1\over F-1}\right)-N_0\left(r,{1\over F'}\right)+S(r,f),$$ where $N_0\left(r,{1\over F'}\right)$ is the counting function of the zeros of $F'$ which are not zeros of $F-1.$

Since $f-g$ is entire and has only finitely many zeros, it follows that
 \begin{flalign*}
T(r,f-g)< \overline{N}\left(r,{1\over F-1}\right)-N_0\left(r,{1\over F'}\right)+S(r,f)\\
\Rightarrow T(r,f-g)< \overline{N}\left(r,{1\over F-1}\right)+S(r,f).
\end{flalign*}
If $f-g$ is transcendental, then $F(z)=1$ must have infinitely many roots, which is a contradiction and hence $f-g$ must be a polynomial, say $p(z);$ that is,  $f(z)=g(z)+p(z).$ }
\end{remark}
  Let $P\mbox{ and }Q$  be polynomials with $1\leq deg(P)<deg(Q)=l$ and $\mathcal{P}$ be the property defined as follows:
	$``\left\langle f,D\right\rangle\in\mathcal{P}\Leftrightarrow f-P\neq 0\mbox{ and }\left(f-Q\right)^{(l)}\neq 0 \mbox{ on }D$''. That is, $f$ satisfies the property $\mathcal{P}$ if, and only if $f-P\mbox{ and }\left(f-Q\right)^{(l)}$ have no zeros in $D.$ With this $\mathcal{P}$, Theorem \ref{Gu2} immediately yields:
	\begin{theorem}\label{Gu3} The family $\mathcal{F}:=\left\{f\in\mathcal{M}\left(D\right):\left\langle f,D\right\rangle\in\mathcal{P}\right\}$ is normal in $D.$
	\end{theorem}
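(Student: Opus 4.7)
The plan is to reduce Theorem \ref{Gu3} directly to Theorem \ref{Gu2} by an additive substitution. For each $f \in \mathcal{F}$ I would set $g := f - P$ and consider the translated family $\mathcal{G} := \{f - P : f \in \mathcal{F}\}$. The first hypothesis $f(z) - P(z) \neq 0$ on $D$ becomes $g(z) \neq 0$ on $D$, so every $g \in \mathcal{G}$ omits the value $a := 0$. For the second hypothesis I would write $f - Q = g - (Q - P)$, so that $(f - Q)^{(l)} = (g - R)^{(l)}$ where $R := Q - P$. Because $\deg P < \deg Q = l$, the polynomial $R$ has degree exactly $l$, hence $(g - R)^{(l)} \neq 0$ on $D$.

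With this translation in place, Theorem \ref{Gu2} applies to $\mathcal{G}$ with the polynomial $R$ (of degree $l$) and the value $a = 0$: every $g \in \mathcal{G}$ satisfies $g \neq 0$ and $(g - R)^{(l)} \neq 0$ on $D$, so $\mathcal{G}$ is normal on $D$.

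It remains to transfer normality from $\mathcal{G}$ back to $\mathcal{F} = \{g + P : g \in \mathcal{G}\}$. Given any sequence $\{f_n\} \subseteq \mathcal{F}$, I would extract a subsequence along which $g_n := f_n - P$ converges spherically locally uniformly on $D$, either to a meromorphic limit or uniformly to $\infty$. Since $P$ is holomorphic, hence continuous and bounded on every compact subset of $D$, adding $P$ preserves spherical locally uniform convergence; therefore $f_n = g_n + P$ converges along the same subsequence. This yields normality of $\mathcal{F}$.

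The entire argument is essentially cosmetic once Theorem \ref{Gu2} is in hand: no Nevanlinna machinery is required, and the only non-substitution step is the standard fact that translation by a fixed polynomial preserves spherical normality. This is consistent with the author's assertion that Theorem \ref{Gu2} \emph{immediately} yields the result; the sole piece of bookkeeping to watch is the degree condition $\deg P < \deg Q = l$, which is exactly what guarantees $\deg(Q - P) = l$ and so lets the $(\cdot)^{(l)}$ hypothesis of Theorem \ref{Gu2} survive the substitution.
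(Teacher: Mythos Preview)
Your proposal is correct and is exactly the argument the paper has in mind: it gives no explicit proof beyond the phrase ``immediately yields,'' and the substitution $g=f-P$, $R=Q-P$ together with the degree observation $\deg R=l$ is precisely how one reads Theorem \ref{Gu3} off from Theorem \ref{Gu2}. Your added remark that translation by the fixed holomorphic function $P$ preserves spherical local uniform convergence (in both the meromorphic-limit and the $\infty$-limit cases) is the only detail needed and is handled correctly.
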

Note that Remark \ref{remark} and Theorem \ref{Gu3} provide a counterexample to the converse of Bloch's Principle.
	
	\medskip

		     W. Schwick   generalized Theorem \ref{Gu} :
	\begin{theorem}\label{schwick} \cite{schiwick} Let $g\not\equiv 0$ be in $\mathcal{M}(D)$ and let $l\in\mathbb{N}.$ Let $\mathcal{F} \subseteq  \mathcal{M}(D)$ be such that $f\neq 0,$  $f^{(l)}\neq g,$ and $f\mbox{ and }g$ have no common poles  for each $f\in\mathcal{F}.$ Then $\mathcal{F}$ is  normal in $D.$
\end{theorem}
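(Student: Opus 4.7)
The plan is to argue by contradiction using Zalcman's rescaling lemma together with Hayman's alternative (Theorem \ref{alternative1}) applied to the rescaled limit. Assume $\mathcal{F}$ is not normal at some $z_0 \in D$. Since the zeros and poles of $g$ are isolated in $D$, after shrinking $D$ we may assume $g$ has neither zeros nor poles in $D$ other than possibly at $z_0$ itself; the hypothesis that $f$ and $g$ have no common poles ensures that each $f \in \mathcal{F}$ is holomorphic at any pole of $g$, which will be used to control the rescaled limit in the pole case.

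The principal situation is $g(z_0) \neq 0, \infty$. Invoking Zalcman's lemma in the form of Pang with exponent $\alpha = l$ (admissible because $f \neq 0$ imposes no restriction from zero multiplicities), one extracts $f_n \in \mathcal{F}$, $z_n \to z_0$, and $\rho_n \to 0^+$ so that
\begin{equation*}
h_n(\zeta) \;:=\; \rho_n^{-l}\, f_n(z_n + \rho_n \zeta) \;\longrightarrow\; h(\zeta)
\end{equation*}
locally uniformly on $\mathbb{C}$ with respect to the spherical metric, where $h$ is a nonconstant meromorphic function of finite order. Since $f_n \neq 0$, Hurwitz's theorem forces $h \neq 0$ throughout $\mathbb{C}$. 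From $h_n^{(l)}(\zeta) = f_n^{(l)}(z_n + \rho_n \zeta)$ and $f_n^{(l)} - g \neq 0$, one has $h_n^{(l)}(\zeta) - g(z_n + \rho_n \zeta) \neq 0$, and as $n \to \infty$ this converges to $h^{(l)}(\zeta) - g(z_0)$. Setting $c := g(z_0) \neq 0$, a second application of Hurwitz gives either $h^{(l)} \equiv c$ or $h^{(l)} - c$ is nowhere zero. The first alternative forces $h$ to be a polynomial of degree $l \geq 1$, which must have a zero and contradicts $h \neq 0$. The second alternative, combined with $h \neq 0$, places $h$ in the hypothesis of Theorem \ref{alternative1} with $a = 0$ and $b = c$, forcing $h$ to be constant and contradicting nonconstancy.

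The exceptional cases---$g(z_0) = 0$ or $z_0$ a pole of $g$---are handled by adjusting the rescaling exponent. If $g(z) = (z - z_0)^m \widetilde g(z)$ with $\widetilde g(z_0) \neq 0$ (where $m > 0$ for a zero and $m < 0$ for a pole), one uses $\alpha = l + m$; passing to a subsequence so that $\xi_n := (z_n - z_0)/\rho_n$ either converges to some $\xi \in \mathbb{C}$ or tends to $\infty$, the rescaled condition becomes that $h^{(l)}(\zeta)$ avoids the polynomial $\widetilde g(z_0)(\xi + \zeta)^m$ (or a nonzero constant when $\xi_n \to \infty$). Combined with $h \neq 0$, Theorem \ref{alternative2} applied with a polynomial $P$ whose $l$-th derivative matches this omitted polynomial yields $h$ constant, a contradiction. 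In the pole case, the no-common-pole hypothesis guarantees that the rescaled $h_n$ are holomorphic near $\zeta = 0$, so $h$ is holomorphic there and the same scheme applies.

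The main obstacle is the exceptional case: the rescaling exponent must be adjusted from $\alpha = l$ to $\alpha = l + m$; one must track $(z_n - z_0)/\rho_n$ to identify which polynomial $h^{(l)}$ actually omits; and one must appeal to the polynomial form of Hayman's alternative (Theorem \ref{alternative2}) rather than the constant form. Once these cases are settled at every possible $z_0 \in D$, the contradictions established establish normality of $\mathcal{F}$ on all of $D$.
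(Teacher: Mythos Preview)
The paper does not prove this theorem; it is quoted from Schwick and serves only as motivation for Theorem~\ref{MTA}. There is therefore no proof in the paper to compare against. Your overall strategy---Zalcman--Pang rescaling followed by Hayman's alternative applied to the limit---is indeed the standard route, and your treatment of the principal case $g(z_0)\neq 0,\infty$ is correct.

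The exceptional cases, however, contain real gaps. First, your appeal to Theorem~\ref{alternative2} is misplaced: that theorem requires $\deg P=l$, so that $P^{(l)}$ is a nonzero \emph{constant}; in your situation $h^{(l)}$ omits a polynomial of degree $m\geq 1$, hence the $P$ you construct has degree $l+m>l$ and Theorem~\ref{alternative2} does not apply. What is actually needed is a small-function version of Hayman's alternative (a Milloux-type estimate of the kind underlying Lemma~\ref{final lemmaA} here), not the constant version. Second, the subcase $\xi_n\to\infty$ is wrong as written: for $m\geq 1$ the expression $(\xi_n+\zeta)^m\widetilde g(z_n+\rho_n\zeta)$ tends to $\infty$, not to a nonzero constant, so no Hurwitz conclusion is available without recentring the rescaling at $z_0$ and arguing afresh. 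Third, in the pole case your choice $\alpha=l+m$ with $m<0$ can violate the admissibility condition $\alpha>-1$ in Pang's lemma once the pole order of $g$ exceeds $l$, and the ``omitted polynomial'' $\widetilde g(z_0)(\xi+\zeta)^m$ is then not a polynomial at all. These are precisely the places where Schwick's argument does nontrivial work; your outline has the right architecture but does not supply the correct tools at the singular points of $g$.
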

	
	According to the converse of Bloch's Principle, one may find a `Picard type' theorem corresponding to Theorem \ref{schwick}, and this is the purpose of this paper. In fact,  we prove the following value distribution result  corresponding to Theorem \ref{schwick} which is related to Hayman's alternative in certain sense:

\begin{theorem}\label{MTA} Suppose that $f\in \mathcal{M}(\mC)$ is transcendental and $\phi$ is a small function of $f$ such that $f\mbox{ and }\phi$ have no common poles. Let $l\in\mathbb{N}$ and $\psi(z)=f^{(l)}(z)$. If $f(z)\neq 0\mbox{ and } \psi(z)\neq \phi(z)$ for all $z\in\mathbb{C},$   then 
 $\psi'(z)= \phi(z)$ and $\psi'(z)= \phi'(z)$ have infinitely many solutions.

\end{theorem}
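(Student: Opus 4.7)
The plan is to prove both assertions together via a single contradiction: if either $\psi'=\phi$ or $\psi'=\phi'$ had only finitely many solutions, the transcendence of $f$ would be violated. First I would distill from the standing hypotheses two preliminary estimates: $\overline{N}(r,f)=T(r,f)+S(r,f)$ and $N(r,1/\psi')=S(r,f)$. The vehicle is the sharp form of Hayman's inequality (the small-function extension of Theorem~3.5 in \cite{Hayman-1}),
\[
T(r,f)\leq \overline{N}(r,f)+N\bigl(r,1/f\bigr)+N\bigl(r,1/(f^{(l)}-\phi)\bigr)-N_0\bigl(r,1/f^{(l+1)}\bigr)+S(r,f),
\]
where $N_0(r,1/f^{(l+1)})$ counts zeros of $f^{(l+1)}$ that are not zeros of $f(f^{(l)}-\phi)$. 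Under the hypotheses the two middle $N$-terms vanish; because $f$ and $\phi$ have no common poles, $f^{(l+1)}$ has no zeros at poles of $f$ or of $\phi$, so $N_0$ collapses to the full counting function $N(r,1/\psi')$. The inequality becomes $T(r,f)+N(r,1/\psi')\leq\overline{N}(r,f)+S(r,f)$, and since $\overline{N}(r,f)\leq T(r,f)$ both claimed identities follow. In particular $m(r,f)=S(r,f)$, so $N(r,f)=T(r,f)+S(r,f)$, whence
\[
T(r,\psi')\geq N(r,\psi')=N(r,f)+(l+1)\overline{N}(r,f)=(l+2)\,T(r,f)+S(r,f).
\]

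Next I would apply the small-function Second Main Theorem (Yamanoi's version) to the transcendental function $\psi'$ with the four small targets $0,\phi,\phi',\infty$; these are indeed small for $\psi'$ because the lower bound just obtained forces $S(r,f)\subseteq S(r,\psi')$. Invoking the first-step estimate $\overline{N}(r,1/\psi')=S(r,f)$, the SMT collapses to
\[
2T(r,\psi')\leq \overline{N}\bigl(r,1/(\psi'-\phi)\bigr)+\overline{N}\bigl(r,1/(\psi'-\phi')\bigr)+\overline{N}(r,\psi')+S(r,\psi'),
\]
and substituting $\overline{N}(r,\psi')=\overline{N}(r,f)=T(r,f)+S(r,f)$ yields
\[
2T(r,\psi')\leq \overline{N}\bigl(r,1/(\psi'-\phi)\bigr)+\overline{N}\bigl(r,1/(\psi'-\phi')\bigr)+T(r,f)+S(r,f).
\]

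If now either of $\psi'=\phi$ or $\psi'=\phi'$ had only finitely many solutions, the corresponding $\overline{N}$-term on the right would be $O(\log r)\subseteq S(r,f)$, while the other is bounded trivially by $T(r,\psi')+O(1)$. In either case one obtains $T(r,\psi')\leq T(r,f)+S(r,f)$, which combined with the lower bound $T(r,\psi')\geq(l+2)\,T(r,f)+S(r,f)$ forces $(l+1)\,T(r,f)\leq S(r,f)$, contradicting the transcendence of $f$ since $l\geq 1$. The degenerate case $\phi\equiv\phi'$ (necessarily $\phi=Ce^{z}$) is handled by applying the SMT with only the three distinct targets $0,\phi,\infty$; the two conclusions of the theorem then coincide and the same chain of estimates still delivers the contradiction. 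Hence each of $\psi'=\phi$ and $\psi'=\phi'$ must admit infinitely many solutions.

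The principal obstacle I anticipate is justifying the particular small-function form of Hayman's sharp inequality with the subtract term $-N_0(r,1/f^{(l+1)})$ and verifying that the hypothesis ``$f$ and $\phi$ have no common poles'' is exactly what lets $N_0$ reduce cleanly to $N(r,1/\psi')$ without residual pole-cancellation corrections; a secondary technical point is the comparison $T(r,\phi)=S(r,f)\subseteq S(r,\psi')$ needed so that Yamanoi's small-function SMT can be invoked at the level of $\psi'$.
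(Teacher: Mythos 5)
Your strategy is genuinely different from the paper's, but it has a real gap at its very first step, and everything downstream depends on it. The key quantities you extract from your ``small-function extension of Hayman's sharp inequality'' are $\overline{N}(r,f)=T(r,f)+S(r,f)$ \emph{and} $N(r,1/\psi')=S(r,f)$; the second of these is what lets you discard the target $0$ in the second main theorem for $\psi'$ and what drives the lower bound $T(r,\psi')\geq (l+2)T(r,f)+S(r,f)$, hence the final contradiction. But the inequality you invoke, with the full subtraction term $-N_0\bigl(r,1/f^{(l+1)}\bigr)$, is not a quotable result in the small-function setting, and it is exactly the point where the constant-target and small-function cases diverge: Hayman's original bookkeeping relies on $(\Psi-1)'=\Psi'$, whereas here $(\psi-\phi)'=\psi'-\phi'\neq\psi'$, and the ramification term degrades. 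What the paper is actually able to prove (Lemma~\ref{theorem 3.2}) subtracts only $N^0_2(r,\psi)=N\left(r,\frac{1}{\psi'}\right)-N_0\left(r,\frac{1}{\psi'}\right)$, i.e.\ only the zeros of $\psi'$ located at multiple zeros of $\psi$; under the hypotheses this can be identically $0$. That weaker inequality does yield your first estimate $\overline{N}(r,f)=T(r,f)+S(r,f)$, but it gives no control at all on $N(r,1/\psi')$. Without $N(r,1/\psi')=S(r,f)$, the term $\overline{N}(r,1/\psi')$ cannot be removed from your four-target second main theorem and no contradiction results. So your proposal assumes the hard part rather than proving it.

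For comparison: the paper compensates for the missing strong subtraction term not by strengthening the Milloux inequality but by a separate estimate on the \emph{simple poles} of $f$ (Lemmas~\ref{lemma3.1A} and~\ref{22feb}), obtained from a local analysis of zeros and poles of the auxiliary function $G=\{\psi'-\phi\}^{l+1}/\{\phi-\psi\}^{l+2}$ (resp.\ with $\psi'-\phi'$ in the numerator); combining this with Lemma~\ref{theorem 3.2} gives Lemma~\ref{final lemmaA}, from which the theorem follows using only the classical three-small-functions theorem from \cite{Hayman-1}. Even if your strong inequality were available, your appeal to Yamanoi's truncated second main theorem for four small targets is a much heavier tool than necessary (three targets, $0,\phi,\infty$ and $0,\phi',\infty$ treated separately, would already suffice and stay within classical Nevanlinna theory). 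The hypothesis that $f$ and $\phi$ have no common poles enters the paper's argument in the local expansion of $G$ at simple poles of $f$, not in collapsing an $N_0$ term as you suggest.
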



\section{ Proof of Theorem \ref{MTA}}\label{maintheorem}
Since the proof of Theorem \ref{MTA} is based on Milloux techniques (see \cite{Hayman-1} p.60), we need to prove some key lemmas for the proof of Theorem \ref{MTA}. Throughout this paper, we shall denote $f^{(l)}(z)$ by $\psi(z)$, where $l\in\mathbb{N}$.

\begin{lemma}\label{theorem 3.2} Let $f \in \mathcal{M}(\mC)$  and let $\phi$ be a small function of $f.$ Then for $r\to\infty$ outside a set of finite linear measure,
\begin{equation}\label{2.1}T\left(r,f\right)\leq\bar{N}\left(r,f\right)+N\left(r,\frac{1}{f}\right)+\bar{N}\left(r,\frac{1}{\psi-\phi}\right)-N^0_2\left(r,\psi\right)+S(r,f),
\end{equation}
where $N^0_2\left(r,\psi\right)=N\left(r,\frac{1}{\psi'}\right)-N_0\left(r,\frac{1}{\psi'}\right)$ and  $N_0\left(r,\frac{1}{\psi'}\right)$ is the counting function of zeros of $\psi'$ which are not zeros of $\psi$.
\end{lemma}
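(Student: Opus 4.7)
My plan is to run the classical Milloux argument (as in Chapter 3 of \cite{Hayman-1}), with $\psi=f^{(l)}$ playing the role of the derivative and $0,\phi,\infty$ serving as three target values for the second fundamental theorem. First, I would pass from $f$ to $\psi$ via $1/f=(1/\psi)\cdot(f^{(l)}/f)$. The lemma on the logarithmic derivative gives $m(r,f^{(l)}/f)=S(r,f)$, hence $m(r,1/f)\leq m(r,1/\psi)+S(r,f)$. Combining this with Nevanlinna's first fundamental theorem in the forms $T(r,f)=m(r,1/f)+N(r,1/f)+O(1)$ and $m(r,1/\psi)=T(r,\psi)-N(r,1/\psi)+O(1)$ yields the preliminary bound
\begin{equation*}
T(r,f)\,\leq\,T(r,\psi)-N\!\left(r,\tfrac{1}{\psi}\right)+N\!\left(r,\tfrac{1}{f}\right)+S(r,f).
\end{equation*}

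Next I would bound $T(r,\psi)$ from above by applying the second fundamental theorem to $\psi$ with the three targets $0,\phi,\infty$. Rather than invoke the small-functions version of the SFT directly, the cleanest route is to apply the classical SFT at $0,1,\infty$ to the auxiliary function $h=\psi/\phi$ (assuming $\phi\not\equiv 0$, which is the only interesting case). Since $T(r,\phi)=S(r,f)$, each of $\bar N(r,1/h)$, $\bar N(r,1/(h-1))$ and $\bar N(r,h)$ differs from its $\psi$-counterpart by $S(r,f)$, and $T(r,h)=T(r,\psi)+S(r,f)$; observing also that the poles of $\psi=f^{(l)}$ and of $f$ coincide as sets, so $\bar N(r,\psi)=\bar N(r,f)$, this produces
\begin{equation*}
T(r,\psi)\,\leq\,\bar N(r,f)+\bar N\!\left(r,\tfrac{1}{\psi}\right)+\bar N\!\left(r,\tfrac{1}{\psi-\phi}\right)+S(r,f).
\end{equation*}
Substituting this into the preliminary bound gives the claimed inequality once we identify $N(r,1/\psi)-\bar N(r,1/\psi)$ with $N_2^0(r,\psi)$: a zero of $\psi$ of multiplicity $m\geq 1$ contributes $m$ to $N(r,1/\psi)$, $1$ to $\bar N(r,1/\psi)$ and exactly $m-1$ to $N(r,1/\psi')$, and every common zero of $\psi$ and $\psi'$ arises in this way, so $N(r,1/\psi)-\bar N(r,1/\psi)=N(r,1/\psi')-N_0(r,1/\psi')=N_2^0(r,\psi)$.

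The main point requiring care is the second-fundamental-theorem step: the three targets $0,\phi,\infty$ for $\psi$ include a small function, so one must either cite a small-functions SFT or transfer to $h=\psi/\phi$ and carefully track the $S(r,f)$-sized errors introduced by the zeros and poles of $\phi$. The hypothesis that $f$ and $\phi$ share no common poles plays no role at this stage; it will enter only in the deduction of Theorem~\ref{MTA} itself.
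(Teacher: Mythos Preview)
Your proof is correct and follows essentially the same Milloux strategy as the paper: reduce $T(r,f)$ to $m(r,1/\psi)$ via the logarithmic-derivative lemma, then bound $m(r,1/\psi)$ by applying the second fundamental theorem to $\psi$ with targets $0,\phi,\infty$, and finally identify $N(r,1/\psi)-\bar N(r,1/\psi)$ with $N_2^0(r,\psi)$. The only cosmetic difference is that the paper cites the small-functions SFT directly (re-deriving its refined form with the $N_0$-term for a generic function and then specializing to $\psi$), whereas you pass to $h=\psi/\phi$ and invoke the classical SFT at $0,1,\infty$; both routes are standard and equivalent up to $S(r,f)$.
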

\begin{proof} By the second fundamental theorem of Nevanlinna for three small functions (see \cite{Hayman-1} Theorem. 2.5, also see\cite{cherry} Theorem. 5.9.1) with $a_1=0,a_2=\infty\mbox{ and } a_3=\phi$, we have 
\begin{equation}\label{one}
T(r,f)\leq \bar{N}\left(r,\frac{1}{f}\right)+\bar{N}\left(r,{f}\right)+\bar{N}\left(r,\frac{1}{f-\phi}\right)+S(r,f).
\end{equation} as $r\to\infty$ outside a set of $r$ of finite linear measure.

\medskip

Since

\medskip

$\bar{N}\left(r,\frac{1}{f}\right)={N}\left(r,\frac{1}{f}\right)-{N}\left(r,\frac{1}{f'}\right)+{N}_0\left(r,\frac{1}{f'}\right),$\\
$~~~~\bar{N}\left(r,\frac{1}{f-\phi}\right)={N}\left(r,\frac{1}{f-\phi}\right)-{N}\left(r,\frac{1}{f'-\phi '}\right)+{N}_0\left(r,\frac{1}{f'-\phi '}\right)$\\

and

\medskip
 
$\bar{N}\left(r,f\right)=N\left(r,f'\right)-N\left(r,f\right),$\\
therefore (\ref{one}) yeilds (after adding $m\left(r,\frac{1}{f}\right)+m\left(r,\frac{1}{f-\phi}\right)$ to both sides) 
\begin{flalign*} T\left(r,f\right)+m\left(r,\frac{1}{f}\right)+m\left(r,\frac{1}{f-\phi}\right) & \leq  m\left(r,\frac{1}{f}\right)+m\left(r,\frac{1}{f-\phi}\right)\\
&+N\left(r,\frac{1}{f}\right)+N\left(r,\frac{1}{f-\phi}\right) +N\left(r,{f'}\right)-N\left(r,{f}\right) &&\\
&-N\left(r,\frac{1}{f'}\right)+N_0\left(r,\frac{1}{f'}\right) +N_0\left(r,\frac{1}{f'-\phi '}\right)&&\\
&-N\left(r,\frac{1}{f'-\phi '}\right)+S(r,f) &&
\end{flalign*}

 \begin{flalign}\label{eq:2MT}
\Rightarrow m\left(r,\frac{1}{f}\right)+m\left(r,\frac{1}{f-\phi}\right)+m\left(r,{f}\right) 
& \leq  2T\left(r,f\right)-N_1\left(r,f\right) +N_0\left(r,\frac{1}{f'}\right) \nonumber\\
 & +N_0\left(r,\frac{1}{f'-\phi '}\right)-N\left(r,\frac{1}{f'-\phi '}\right)+S(r,f),&&
\end{flalign}

where  $N_1\left(r,f\right)=N\left(r,\frac{1}{f'}\right)+2N\left(r,f\right)-N\left(r,f'\right).$ 

\medskip

Applying (\ref{eq:2MT}) to $\psi=f^{(l)} \mbox{ and put }g=\psi-\phi, $ we have 
\begin{flalign}\label{eq:3MT}
m\left(r,\frac{1}{\psi}\right)+m\left(r,\frac{1}{g}\right)+m\left(r,{\psi}\right) &\leq 2T\left(r,\psi\right)-N_1\left(r,\psi\right)+N_0\left(r,\frac{1}{\psi'}\right) \nonumber\\
	&+N_0\left(r,\frac{1}{g '}\right)-N\left(r,\frac{1}{g'}\right)+S(r,\psi) &&
\end{flalign} as $r\to\infty$ outside a set of $r$ of finite linear measure.\\

   Since 
	
	\medskip
	
	$N\left(r,\psi'\right)-N\left(r,\psi\right)=\bar{N}\left(r,f\right)$\\
	
	\medskip
	
	and 
	
	\medskip
	
	$N\left(r,\frac{1}{g}\right)-N\left(r,\frac{1}{g'}\right)=\bar{N}\left(r,\frac{1}{g}\right)-N_0\left(r,\frac{1}{g'}\right)$
	
	\medskip
	
	and using the first fundamental theorem of Nevanlinna, we have  
\begin{flalign*}\label{eq:4MT} 
2T\left(r,\psi\right)-N_1\left(r,\psi\right)& =m\left(r,{\psi}\right)+m\left(r,\frac{1}{g}\right)+N\left(r,\psi\right)+N\left(r,\frac{1}{g}\right)\\
&-N\left(r,\frac{1}{\psi '}\right)-2N\left(r,\psi\right)+N\left(r,\psi '\right) +S(r,f) &&\\
&=m\left(r,{\psi}\right)+m\left(r,\frac{1}{g}\right) + N\left(r,\frac{1}{g}\right)-N\left(r,\frac{1}{\psi'}\right) &&\\
&+\bar{N}\left(r,f\right)+S(r,f)&&
\end{flalign*}
and hence (\ref{eq:3MT}) reduces to 
\begin{equation}\label{eq:proximity}
	m\left(r,\frac{1}{\psi}\right)\leq \bar{N}\left(r,\frac{1}{g}\right)-N\left(r,\frac{1}{\psi'}\right)+\bar{N}\left(r,f\right)+N_0\left(r,\frac{1}{\psi'}\right)+S(r,f).
	\end{equation}


Also
	\begin{flalign}\label{eq:10MT}
	T\left(r,f\right) &=T\left(r,\frac{1}{f}\right)+O(1)\nonumber\\
	&=m\left(r,\frac{\psi}{f\psi}\right)+N\left(r,\frac{1}{f}\right)+O(1)&&\nonumber\\
	&\leq m\left(r,\frac{\psi}{f}\right)+m\left(r,\frac{1}{\psi}\right)+N\left(r,\frac{1}{f}\right)+O(1)&&\nonumber\\
	&=m\left(r,\frac{1}{\psi}\right)+N\left(r,\frac{1}{f}\right)+S(r,f).&&
	\end{flalign}
Now by using (\ref{eq:proximity}) in  (\ref{eq:10MT}), we get
\begin{flalign}\label{eq:11MT}
T\left(r,f\right) &\leq \bar{N}\left(r,f\right)+N\left(r,\frac{1}{f} \right) +\bar{N}\left(r,\frac{1}{g}\right) -N\left(r,\frac{1}{\psi'}\right)+N_0\left(r,\frac{1}{\psi'}\right)+S(r,f) \nonumber\\
&=\bar{N}\left(r,f\right)+N\left(r,\frac{1}{f}\right)+\bar{N}\left(r,\frac{1}{\psi-\phi}\right)-N^0_2\left(r,{\psi}\right)+S(r,f)&&
\end{flalign}
as $r\to\infty$ outside a set of $r$ of finite linear measure,	
where  $$N^0_2\left(r,\psi\right)=N\left(r,\frac{1}{\psi'}\right)-N_0\left(r,\frac{1}{\psi'}\right)=N\left(r,\frac{1}{\psi}\right)-\bar{N}\left(r,\frac{1}{\psi}\right)$$ counts only repeated zeros of $\psi$ with multiplicity reduced by $1$.   
\end{proof}
\begin{lemma}\label{lemma3.1A}
 Let $f\in\mathcal{M}\left(\mathbb{C}\right)$  and let $\phi$ be a small function of $f$ such that $f \mbox{ and }\phi$ have no common poles.  Then
\begin{equation} lN_1\left(r,f\right)\leq \bar{N_2}\left(r,f\right)+\bar{N}\left(r,\frac{1}{\psi-\phi}\right)+\bar{N}\left(r,\frac{1}{\psi'-\phi}\right)+S(r,f),
\end{equation}
 where $N_1\left(r,f\right)$ is the counting function of simple poles of $f$and $\bar{N_2}\left(r,f\right)$ is the counting function of multiple poles of $f$ counted once.
\end{lemma}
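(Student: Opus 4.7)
The plan is to build an auxiliary meromorphic function $\Phi$ that (i) vanishes to order at least $l$ at each simple pole of $f$ and (ii) has Nevanlinna characteristic bounded by the right-hand side of the asserted inequality; the first fundamental theorem then gives $l\,N_1(r,f)\le N(r,1/\Phi)\le T(r,\Phi)+O(1)$, closing the argument. Concretely, I would take
\[
\Phi \;:=\; (l+1)\,\frac{(\psi'-\phi)'}{\psi'-\phi}\;-\;(l+2)\,\frac{(\psi-\phi)'}{\psi-\phi}.
\]
The coefficients are calibrated so that at a pole of $f$ of order $m$ (at which $\phi$ is holomorphic, by the no-common-poles assumption), the two logarithmic derivatives have residues $-(m+l)$ and $-(m+l+1)$, giving $\Phi$ residue $m-1$: this cancels precisely at simple poles ($m=1$), while for $m\ge 2$ it leaves a simple pole of $\Phi$ with nonzero residue.

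The key technical step, and the main obstacle, is to verify the order-$\ge l$ vanishing of $\Phi$ at every simple pole $z_0$ of $f$. Writing $f(z)=c/(z-z_0)+\text{holomorphic}$ and $A:=(-1)^l\,l!\,c$, one has
\[
R(z):=(\psi-\phi)(z)(z-z_0)^{l+1}=A+O((z-z_0)^{l+1}),
\]
\[
P(z):=(\psi'-\phi)(z)(z-z_0)^{l+2}=-(l+1)A+O((z-z_0)^{l+2}),
\]
with $R(z_0),P(z_0)\ne 0$. Direct differentiation shows $R'(z)=(z-z_0)^l\cdot(\text{holomorphic})$ and $P'(z)=(z-z_0)^{l+1}\cdot(\text{holomorphic})$, whence $R'/R=O((z-z_0)^l)$ and $P'/P=O((z-z_0)^{l+1})$. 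Since $(\psi-\phi)'/(\psi-\phi)=R'/R-(l+1)/(z-z_0)$ and $(\psi'-\phi)'/(\psi'-\phi)=P'/P-(l+2)/(z-z_0)$, the singular $1/(z-z_0)$ contributions to $\Phi$ cancel and the remainder $(l+1)(P'/P)-(l+2)(R'/R)$ is $O((z-z_0)^l)$, as desired.

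For the characteristic bound, the logarithmic derivative lemma applied to $\psi-\phi$ and $\psi'-\phi$, both of Nevanlinna characteristic $O(T(r,f))$, gives $m(r,\Phi)=S(r,f)$. A residue analysis at the remaining candidate singularities of $\Phi$ then yields
\[
N(r,\Phi)\le \bar{N_2}(r,f)+\bar{N}\!\left(r,\frac{1}{\psi-\phi}\right)+\bar{N}\!\left(r,\frac{1}{\psi'-\phi}\right)+S(r,f),
\]
since poles of $f$ of multiplicity $\ge 2$ contribute one apiece to $\bar{N_2}(r,f)$, simple zeros of $\psi-\phi$ and of $\psi'-\phi$ each contribute at most one per point, and poles of $\phi$ contribute at most $N(r,\phi)=S(r,f)$. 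Combining via $T(r,\Phi)=m(r,\Phi)+N(r,\Phi)$ and the first fundamental theorem produces the lemma. The exceptional case $\Phi\equiv 0$ would force $(\psi'-\phi)^{l+1}\equiv C(\psi-\phi)^{l+2}$ and hence, by matching pole orders at any pole of $f$, that every pole of $f$ is simple; the inequality is then verified directly in that situation.
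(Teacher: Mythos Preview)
Your proof is correct and is essentially the paper's own argument in a slightly different packaging. The paper introduces the auxiliary function
\[
G(z)=\frac{\bigl(\psi'(z)-\phi(z)\bigr)^{l+1}}{\bigl(\phi(z)-\psi(z)\bigr)^{l+2}},
\]
shows that $G$ is regular and nonzero at each simple pole $z_0$ of $f$ while $G'$ vanishes there to order at least $l$, and then bounds $N_0(r,1/G')$ via Jensen's formula applied to $G'/G$, arriving at $\bar N(r,G)+\bar N(r,1/G)+m(r,G'/G)$; the subsequent case analysis of zeros and poles of $G$ matches your residue analysis. Your $\Phi$ is exactly $G'/G$, so the order-$l$ vanishing you compute is the same phenomenon, and your bound $T(r,\Phi)=m(r,\Phi)+N(r,\Phi)$ reproduces the paper's Jensen-formula estimate term for term (the paper merely discards the extra $-m(r,G/G')$). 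Working directly with the logarithmic-derivative combination is marginally cleaner, but there is no substantive difference in the route.

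One small point: your treatment of the degenerate case $\Phi\equiv 0$ is a bit sketchy (``verified directly'' is not quite a proof), but the paper's proof has exactly the same tacit assumption that $G$ is nonconstant, so you are not missing anything the paper provides.
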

The proof of Lemma \ref{lemma3.1A} is carried out by following the proof of Lemma 3.1 in \cite{Hayman-1} with certain modifications.

\medskip

{\bf Proof of Lemma \ref{lemma3.1A}.}
 Put 
$$G(z)={\left\{\psi'(z)-\phi(z)\right\}^{l+1}\over {\left\{ \phi(z)-\psi(z)\right\}^{l+2}}}.$$
 If $z_0$ is a simple pole  of $f(z),$ then near $z_0$ we have  
$$f(z)={a\over z-z_0}+O(1),$$
and   
$$\psi(z)={(-1)^{l}a (l!) \over \left(z-z_0\right)^{l+1}}+O(1)$$
 near $z_0$, and hence 
$$ \psi'={(-1)^{l+1}a(l+1)! \over \left(z-z_0\right)^{l+2}}\left\{1+O(z-z_0)^{l+2}\right\}.$$
  Since $f$ and $\phi$ have no common poles, therefore near $z_0,$ we have 
$$G(z)={(-1)^{l+1}(l+1)^{l+1} \over {al!}}\left\{1+O(z-z_0)^{l+1}\right\}$$ 
  which implies that $ G(z_0)\neq 0,\infty,$ and  $G'(z)$ has a zero of order atleast $l$ at $z_0$ and so 
\begin{eqnarray}\label{eq:1L}
l N_1\left(r,f\right)\leq N_0\left(r,\frac{1}{G'}\right).
\end{eqnarray}

 Applying Jensen's formula to ${G'/{G}}$ we get
\begin{equation}\label{eq:2L}
N\left(r,\frac{G}{G'}\right)-N\left(r,\frac{G'}{G}\right)=m\left(r,\frac{G'}{G}\right)-m\left(r,\frac{G}{G'}\right)+O(1).
\end{equation}  
Since the only zeros of ${G'}/{G}$ are the zeros of $G'$ which are not zeros of $G,$ we have  
\begin{eqnarray}\label{eq:3L}
N\left(r,\frac{G}{G'}\right)=N_0\left(r,\frac{1}{G'}\right).
\end{eqnarray}
 Also, ${G'}/{G}$ has only simple poles at the zeros and poles of $G,$ so 
\begin{eqnarray}\label{eq:4L}
 N\left(r,\frac{G'}{G}\right)=\bar{N}\left(r, G\right)+\bar{N}\left(r,\frac{1}{G}\right).
\end{eqnarray}

Using (\ref{eq:3L}) and (\ref{eq:4L}) in  (\ref{eq:2L}), we obtain
\begin{flalign}\label{eq:5L}
N_0\left(r,\frac{1}{G'}\right)-\bar{N}\left(r,\frac{1}{G}\right)-\bar{N}\left(r,{G}\right)
=m\left(r,\frac{G'}{G}\right)-m\left(r,\frac{G}{G'}\right)+O(1).
\end{flalign}
Now, from (\ref{eq:1L}), (\ref{eq:2L}) and (\ref{eq:5L}), we have
\begin{flalign}\label{eq:simple pole}
  lN_1\left(r,f\right)  &\leq  N_0\left(r,\frac{1}{G'}\right)\nonumber\\
&= \bar{N}\left(r,\frac{1}{G}\right)+\bar{N}\left(r,{G}\right)+m\left(r,\frac{G'}{G}\right)-m\left(r,\frac{G}{G'}\right)+O(1)&&\nonumber\\
&\leq \bar{N}\left(r,\frac{1}{G}\right)+\bar{N}\left(r,{G}\right)+m\left(r,\frac{G'}{G}\right)+O(1).&& 
\end{flalign}

   Let $z_0$ be a pole of $\psi(z)-\phi(z)$ of order $m$, say. Then near $z_0$ $$\psi(z)-\phi(z)={s_0(z)\over\left(z-z_0\right)^m}$$   for some  function $s_0(z)$ analytic in a neighborhood of $z_0 \mbox{ such that } s_0(z_0)\neq 0.$

\medskip

   Now, there are two cases:
	\begin{itemize}
	\item[Case 1:]  $z_0$ is a pole of $f(z)$. Then  $m=k+l $, where $k>1$ is the multiplicity of $z_0$ as a pole of $f$. Since $z_0$ is not a pole of $\phi,$ we see that $z_0$ is a pole of $\psi'(z)-\phi(z)$ of multiplicity  $m+1.$ Therefore, near $z_0$ 
	$$G(z)=t_0(z)\left(z-z_0\right)^{k-1},$$
	for some  function $t_0(z)$ analytic in a neighborhood of $z_0 \mbox{ such that } t_0(z_0)\neq 0.$ So $z_0$ is a zero of order $k-1$ of $G(z)$.
	
	\item[Case 2:] $z_0$ is a pole of $\phi(z).$ Then $z_0$ is also a pole of $\psi'(z)-\phi(z)$ of multiplicity $m.$ Therefore, near $z_0$ 
	$$G(z)=t_1(z)\left(z-z_0\right)^{m},$$ 
	for some  function $t_1(z)$ analytic in a neighborhood of $z_0 \mbox{ such that } t_1(z_0)\neq 0.$ This shows that $z_0$ is a pole of $G(z)$ of the same multiplicity as that of $\phi(z).$
\end{itemize}
Similarly, looking at the poles of $\psi'(z)-\phi(z),$ we obtain the same conclusion as in the case of poles of $\psi(z)-\phi(z).$

\medskip

  Next, correspnding to the zeros of $\psi(z)-\phi(z)$ and $\psi'(z)-\phi(z),$ we have the following three cases:
	\begin{itemize}
	\item[Case 1:]  $z_0$ is a zero of $\psi(z)-\phi(z)$ but it is not a zero of $\psi'(z)-\phi(z).$ Then  $z_0$ is a pole of $G(z)$.
	\item[Case 2:]  $z_0$ is zero of $\psi'(z)-\phi(z)$ but it is not a zero of $\psi(z)-\phi(z).$ Then  $z_0$ is a zero  of $G(z)$.
	\item[Case 3:] $z_0$ is a common zero of $\psi'(z)-\phi(z) \mbox{ and }\psi(z)-\phi(z)$. Let $j$ and $k$ be the multiplicities of $z_0$ as a zero of $\psi'(z)-\phi(z) \mbox{ and }\psi(z)-\phi(z)$, respectively. Then near $z_0$,
		$$G(z)=t_2(z)\left(z-z_0\right)^{(l+1)j-(l+2)k}$$
		for some  function  $t_2(z)$ analytic in a neighborhood of $z_0 \mbox{ such that }t_2(z_0)\neq 0$.
		  
			\medskip
			
			 Thus $z_0$ is a pole of $G(z)$ if $k>\frac{l+1}{l+2}j$ and $z_0$ is a zero of $G(z)$ if $k<\frac{l+1}{l+2}j$.
\end{itemize}
   
	\medskip
	
	Let $N(r,\frac{1}{f},\frac{1}{g^0})$ be the counting function of zeros of $f$ which are not zeros of $g$ , $N(r,\frac{1}{f},\frac{1}{g})$ be the counting function corresponding to the common zeros of $f\mbox{ and }g$ and $N^{(\alpha)}(r,\frac{1}{f},\frac{1}{g})$ be the counting function corresponding to the common zeros of $f$ and $g$, such that the $m(f, z_0)>\alpha m(g,z_0)$, where by $m(f,z_0)$ we denote the multiplicity of $z_0$ as a zero of $f$. With these notations and   the preceding arguments , we find that

\begin{flalign}\label{eq:poles}
 \bar{N}\left(r,\frac{1}{G}\right) &\leq \bar{N}_2(r,f)+\bar{N}(r,\phi) + \bar{N}\left(r,\frac{1}{\psi'-\phi},\frac{1}{(\psi-\phi)^0}\right) \nonumber \\
&+\bar{N}^{\left(\frac{l+2}{l+1}\right)}\left(r,\frac{1}{\psi'-\phi},\frac{1}{\psi-\phi}\right) &&
\end{flalign}
and
\begin{flalign}\label{eq:zeros}
\bar{N}(r,G) &\leq \bar{N}\left(r,\frac{1}{\psi-\phi},\frac{1}{(\psi'-\phi)^0}\right)+\bar{N}^{\left(\frac{l+1}{l+2}\right)}\left(r,\frac{1}{\psi-\phi},\frac{1}{\psi'-\phi}\right).
\end{flalign}
Note that 
\begin{flalign*}
 &\bar{N}\left(r,\frac{1}{\psi-\phi},\frac{1}{(\psi'-\phi)^0}\right)+\bar{N}^{\left(\frac{l+1}{l+2}\right)}\left(r,\frac{1}{\psi-\phi},\frac{1}{\psi'-\phi}\right) &&\\
&+\bar{N}\left(r,\frac{1}{\psi'-\phi},\frac{1}{(\psi-\phi)^0}\right)+\bar{N}^{\left(\frac{l+2}{l+1}\right)}\left(r,\frac{1}{\psi'-\phi},\frac{1}{\psi-\phi}\right) && \\
&\leq \bar{N}\left(r,\frac{1}{\psi-\phi}\right)+\bar{N}\left(r,\frac{1}{\psi'-\phi}\right).&&
\end{flalign*}
Therefore, using (\ref{eq:poles}) and (\ref{eq:zeros}) in (\ref{eq:simple pole}), we get 
\begin{flalign}\label{eq:I1}
  lN_1\left(r,f\right) & \leq \bar{N}_2(r,f)+\bar{N}(r,\phi) + \bar{N}\left(r,\frac{1}{\psi-\phi}\right)
	+\bar{N}\left(r,\frac{1}{\psi'-\phi}\right)+m\left(r,\frac{G'}{G}\right)+O(1). &&
\end{flalign}
Since $T(r,\phi)=S(r,f)$ and $S(r,\psi)=S(r,f)$, by Theorem 3.1 in \cite{Hayman-1}, we have $$m\left(r,\frac{G'}{G}\right)=S(r,f).$$

  Thus from (\ref{eq:I1}), it follows that 
	\begin{flalign}
  lN_1\left(r,f\right) & \leq \bar{N}_2(r,f) + \bar{N}\left(r,\frac{1}{\psi-\phi}\right)
	+\bar{N}\left(r,\frac{1}{\psi'-\phi}\right)+S(r,f). &&
\end{flalign}
\qed

\begin{lemma}\label{22feb}
Let $f\in\mathcal{M}\left(\mathbb{C}\right)$  and let $\phi$ be a small function of $f$ such that $f \mbox{ and }\phi$ have no common poles.  Then
\begin{equation} lN_1\left(r,f\right)\leq \bar{N_2}\left(r,f\right)+\bar{N}\left(r,\frac{1}{\psi-\phi}\right)+ N_0\left(r,\frac{1}{\psi'-\phi'}\right)+S(r,f),
\end{equation}
 where $N_1\left(r,f\right)$ is the counting function of simple poles of $f$, $\bar{N_2}\left(r,f\right)$ is the counting function of multiple poles of $f$ counted once and  $N_0\left(r,\frac{1}{\psi'-\phi'}\right)$ is the counting function of zeros of $\psi'-\phi'$ which are not repeated zeros of $\psi-\phi$.
\end{lemma}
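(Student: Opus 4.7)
I plan to mimic the argument of Lemma \ref{lemma3.1A}, but with an auxiliary function built from $\psi'-\phi'$ in place of $\psi'-\phi$. Specifically, put
$$G(z)=\frac{(\psi'(z)-\phi'(z))^{l+1}}{(\phi(z)-\psi(z))^{l+2}}.$$
The algebraic fact driving the sharper conclusion is that $\psi'-\phi'$ is literally the derivative of $\psi-\phi$, which forces a cleaner behaviour of $G$ at common zeros of the two.

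First I verify that at every simple pole $z_0$ of $f$, the local expansions $\psi(z)=(-1)^l a\,l!/(z-z_0)^{l+1}+O(1)$ and $\psi'(z)=(-1)^{l+1}a(l+1)!/(z-z_0)^{l+2}+O(1)$, combined with the holomorphy of $\phi$ and $\phi'$ at $z_0$ (forced by the assumption that $f$ and $\phi$ share no poles), give $G(z)=C_{a,l}\{1+O((z-z_0)^{l+1})\}$ for a nonzero constant $C_{a,l}$. Hence $G(z_0)\neq 0,\infty$ and $G'$ vanishes to order at least $l$ at $z_0$, so $l\,N_1(r,f)\leq N_0(r,1/G')$. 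Applying Jensen's formula to $G'/G$ by the same manipulation that yielded (\ref{eq:simple pole}) in the proof of Lemma \ref{lemma3.1A}, and invoking $m(r,G'/G)=S(r,f)$ (Theorem 3.1 of \cite{Hayman-1}, since $\phi$ is small), one obtains
$$l\,N_1(r,f)\leq\bar{N}(r,G)+\bar{N}(r,1/G)+S(r,f).$$

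The main work, and the step I expect to be the most delicate, is the local classification of zeros and poles of $G$. The crucial new observation is: if $z_0$ is a zero of $\psi-\phi$ of order $k$, then, because $\psi'-\phi'=(\psi-\phi)'$, it is a zero of $\psi'-\phi'$ of order \emph{exactly} $k-1$, and so the order of $G$ at $z_0$ equals $(l+1)(k-1)-(l+2)k=-(k+l+1)<0$. Thus every zero of $\psi-\phi$ produces a pole of $G$, contributing at most once to $\bar{N}(r,1/(\psi-\phi))$. A pole of $f$ of order $k\geq 2$ produces a zero of $G$ of order $k-1$ (the same computation as in Lemma \ref{lemma3.1A}, since $\phi$ and $\phi'$ are analytic there), and is absorbed into $\bar{N}_2(r,f)$. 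Poles of $\phi$ contribute at most $\bar{N}(r,\phi)=S(r,f)$ to $\bar{N}(r,G)+\bar{N}(r,1/G)$. Finally, a zero of $\psi'-\phi'$ that is not a zero of $\psi-\phi$ gives a zero of $G$ of order $(l+1)j\geq l+1$, and such a point is a fortiori not a repeated zero of $\psi-\phi$, so it is counted by $N_0(r,1/(\psi'-\phi'))$.

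Assembling these four contributions yields
$$\bar{N}(r,G)+\bar{N}(r,1/G)\leq\bar{N}_2(r,f)+\bar{N}(r,1/(\psi-\phi))+N_0(r,1/(\psi'-\phi'))+S(r,f),$$
which plugged into the earlier display delivers the required bound. The subtle point throughout is ruling out zeros of $G$ at repeated zeros of $\psi-\phi$: the identity $\psi'-\phi'=(\psi-\phi)'$ is precisely what guarantees that every such point contributes a pole, never a zero, of $G$, so nothing escapes the bookkeeping.
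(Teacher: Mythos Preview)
Your proposal is correct and follows essentially the same route as the paper: the same auxiliary function $G=(\psi'-\phi')^{l+1}/(\phi-\psi)^{l+2}$, the same local expansion at simple poles of $f$ giving $lN_1(r,f)\le N_0(r,1/G')$, the same reduction via Jensen to bounding $\bar N(r,G)+\bar N(r,1/G)$, and the same case analysis of zeros and poles of $h=\psi-\phi$ (the paper writes the pole order of $G$ at a zero of $h$ directly as $l+1+m$, which is your $(l+2)k-(l+1)(k-1)=k+l+1$). Your emphasis on the identity $\psi'-\phi'=(\psi-\phi)'$ as the mechanism forcing every zero of $\psi-\phi$ to be a pole of $G$ is exactly the point, and your absorption of the $\phi$-pole contribution into $S(r,f)$ matches the paper's slightly more explicit bookkeeping with $\bar N_{l+1}(r,\phi)$.
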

\begin{proof}
 Define $$G(z)={\left\{\psi'(z)-\phi'(z)\right\}^{l+1}\over {\left\{ \phi(z)-\psi(z)\right\}^{l+2}}}.$$ Then as in the proof of Lemma \ref{lemma3.1A} above,  we
 again arrive at (\ref{eq:simple pole}).
 
 Next, to find the distribution of poles and zeros of $G(z)$, we proceed as follows;
	 
	  Put, $$h(z)=\psi(z)-\phi(z).$$   	
     If $z_0$ is a pole of $h(z)$ of order $m,$ then near $z_0,$
$$h(z)={s(z)\over\left(z-z_0\right)^m}\mbox{  and  }h'(z)={t(z)\over\left(z-z_0\right)^{m+1}}~,$$
 where $s(z)\mbox{ and }t(z)$ are  functions analytic in a neighborhood of $z_0$ and both have no zeros at $z_0.$ So, 
\begin{eqnarray}\label{eq:6LA}
G(z)={w(z)\over\left(z-z_0\right)^{l+1-m}}
\end{eqnarray}
 for some  function  $w(z)$ analytic in a neighborhood of $z_0 \mbox{ such that }w(z_0)\neq 0.$ 

		Next if $z_0$ is a zero of $h(z),$ then near $z_0$, $h(z)={l(z)\left(z-z_0\right)^m}$ and so 
\begin{eqnarray}\label{eq:7LA}
G(z)={m(z)\over\left(z-z_0\right)^{l+1+m}}
\end{eqnarray} 
where $l(z)\mbox{ and }m(z)$ are functions analytic in a neighborhood of $z_0$ and both have no zeros at $z_0.$ 

    From (\ref{eq:7LA}) and (\ref{eq:6LA}) we see that the  only poles of $G(z)$ occur at
  \begin{itemize}
	\item[(i)] the roots of $h(z)=0$ and 
	\item[(ii)] the poles of $\phi(z)$ of multiplicity less than $l+1$.
	\end{itemize}
 Therefore,
\begin{eqnarray}\label{eq:8LA}
\bar{N}\left(r,G\right)\leq\bar{N}\left(r,\frac{1}{\psi-\phi}\right)+\bar{N}\left(r,\phi\right)-\bar{N}_{l+1}\left(r,\phi\right),
\end{eqnarray}
 where  $\bar{N}_{k}\left(r,\phi\right)$ is the counting function of  poles of $\phi(z)$  which have multiplicity atleast $k$, each pole is counted once.
   
	   Since the zeros of $G(z)$ occur at 
	\begin{itemize}
	\item[(i)] the roots of $h'(z)=0$ which are not the roots of $h(z)=0$
	\item[(ii)] multiple poles of $f(z)$ and 
	\item[(iii)] poles of $\phi$ of multiplicity greater than $l+1$,
\end{itemize}
 therefore, 
\begin{eqnarray}\label{eq:9LA}
\bar{N}\left(r,\frac{1}{G}\right)\leq \bar{N}_0\left(r,\frac{1}{\psi'-\phi'}\right)+\bar{N}_2\left(r,f\right)+\bar{N}_{l+1}\left(r,\phi\right).
\end{eqnarray}
Adding (\ref{eq:8LA}) and (\ref{eq:9LA}), we have;
\begin{flalign}\label{eq:10LA}
\bar{N}\left(r,G\right)+\bar{N}\left(r,\frac{1}{G}\right) \leq \bar{N_2}\left(r,f\right)+\bar{N}\left(r,\phi\right)
+\bar{N}\left(r,\frac{1}{\psi-\phi}\right)+N_0\left(r,\frac{1}{\psi'-\phi'}\right).
\end{flalign}
Since  $T(r,\phi)=S(r,f)$ and $S(r,\psi)=S(r,f)$, by Theorem $3.1$ in \cite{Hayman-1}, we have 
$$m\left(r,\frac{G'}{G}\right)=S(r,f) .$$ 

   Thus, from (\ref{eq:simple pole}) and (\ref{eq:10LA}), it follows that 
	\begin{equation}\label{eq:12LA}
	lN_1\left(r,f\right)\leq \bar{N_2}\left(r,f\right)+\bar{N}\left(r,\frac{1}{\psi-\phi}\right)+N_0\left(r,\frac{1}{\psi'-\phi'}\right)+ S(r,f).
	\end{equation}
\end{proof}
\begin{lemma}\label{final lemmaA} Let $f, \psi \mbox{ and }\phi$ be  as in Lemma \ref{lemma3.1A}. Then
\begin{flalign*}
 (a)~ T\left(r,f\right)&\leq \left(2+\frac{1}{l}\right)N\left(r,\frac{1}{f}\right)+\left(2+\frac{2}{l}\right)\bar{N}\left(r,\frac{1}{\psi-\phi}\right)\\
&+\frac{1}{l}\bar{N}\left(r,\frac{1}{\psi'-\phi}\right)-\left(2+\frac{1}{l}\right)N^0_2\left(r,\psi\right)+S(r,f).&&
\end{flalign*}
 \begin{flalign*}
(b)~   T\left(r,f\right)&\leq \left(2+\frac{1}{l}\right)N\left(r,\frac{1}{f}\right)+\left(2+\frac{2}{l}\right)\bar{N}\left(r,\frac{1}{\psi-\phi}\right)\\
&+\frac{1}{l}N_0\left(r,\frac{1}{\psi'-\phi'}\right)-\left(2+\frac{1}{l}\right)N^0_2\left(r,\psi\right)+S(r,f).&&
\end{flalign*}
\end{lemma}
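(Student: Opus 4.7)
The plan is to combine Lemma \ref{theorem 3.2} with Lemma \ref{lemma3.1A} (for part (a)) or Lemma \ref{22feb} (for part (b)), and then absorb the resulting residual $\bar{N}_2(r,f)$ term by reapplying the Milloux-type inequality from Lemma \ref{theorem 3.2}.

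For part (a), I would start from the bound
\[
T(r,f) \leq \bar{N}(r,f) + N\!\left(r, \tfrac{1}{f}\right) + \bar{N}\!\left(r, \tfrac{1}{\psi-\phi}\right) - N_2^0(r,\psi) + S(r,f) \qquad (\star)
\]
supplied by Lemma \ref{theorem 3.2}, then split $\bar{N}(r,f) = N_1(r,f) + \bar{N}_2(r,f)$ and apply Lemma \ref{lemma3.1A} to bound $l N_1(r,f) \leq \bar{N}_2(r,f) + \bar{N}(r, 1/(\psi-\phi)) + \bar{N}(r, 1/(\psi'-\phi)) + S(r,f)$. After substitution into $(\star)$ and division by appropriate constants, this produces a preliminary inequality in which $T(r,f)$ is controlled by
\[
\Bigl(1+\tfrac{1}{l}\Bigr)\bar{N}_2(r,f) + N\!\left(r, \tfrac{1}{f}\right) + \Bigl(1+\tfrac{1}{l}\Bigr)\bar{N}\!\left(r, \tfrac{1}{\psi-\phi}\right) + \tfrac{1}{l}\bar{N}\!\left(r, \tfrac{1}{\psi'-\phi}\right) - N_2^0(r,\psi) + S(r,f).
\]

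The main obstacle will be to eliminate the $(1+1/l)\bar{N}_2(r,f)$ term that persists in this preliminary bound. I would handle this by invoking $(\star)$ a second time: since $T(r,f) \geq N(r,f)$ always holds, and since every multiple pole of $f$ contributes at least $1$ to $N(r,f) - \bar{N}(r,f)$, one obtains
\[
\bar{N}_2(r,f) \leq N(r,f) - \bar{N}(r,f) \leq T(r,f) - \bar{N}(r,f) \leq N\!\left(r, \tfrac{1}{f}\right) + \bar{N}\!\left(r, \tfrac{1}{\psi-\phi}\right) - N_2^0(r,\psi) + S(r,f),
\]
where the final step is merely a rearrangement of $(\star)$. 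Multiplying this inequality by $1+1/l$ and adding it to the preliminary estimate will yield precisely the coefficients $(2+1/l)$ on $N(r, 1/f)$ and $-N_2^0(r,\psi)$ and $(2+2/l)$ on $\bar{N}(r, 1/(\psi-\phi))$, while leaving the $\tfrac{1}{l}\bar{N}(r, 1/(\psi'-\phi))$ term unchanged, thereby matching (a) exactly.

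Part (b) would follow the same template verbatim, with Lemma \ref{22feb} in place of Lemma \ref{lemma3.1A}; this only substitutes $N_0(r, 1/(\psi'-\phi'))$ for $\bar{N}(r, 1/(\psi'-\phi))$ in the preliminary inequality, and the auxiliary bound on $\bar{N}_2(r,f)$ together with the absorption step is unaffected. The key conceptual point in both parts is that the coefficient jump from $1 + 1/l$ to $2 + 2/l$ on $\bar{N}(r, 1/(\psi-\phi))$ (and the parallel jumps on the other terms) is exactly what the second use of $(\star)$ pays for when it is applied to control $\bar{N}_2(r,f)$.
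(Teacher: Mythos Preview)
Your proposal is correct and follows essentially the same route as the paper: both combine Lemma~\ref{theorem 3.2} with Lemma~\ref{lemma3.1A} (resp.\ Lemma~\ref{22feb}) and eliminate the residual $\bar{N}_2(r,f)$ via the auxiliary bound $\bar{N}_2(r,f)\le N(r,1/f)+\bar{N}(r,1/(\psi-\phi))-N_2^0(r,\psi)+S(r,f)$ obtained by subtracting $\bar{N}(r,f)$ from $(\star)$. The only cosmetic difference is the order of substitution---the paper first bounds $\bar{N}_2$, then $N_1$, then $\bar{N}$, then plugs into $(\star)$, whereas you substitute Lemma~\ref{lemma3.1A} into $(\star)$ first and handle $\bar{N}_2$ afterward---but the ingredients and final arithmetic are identical.
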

\begin{proof}
By Lemma \ref{theorem 3.2}, we have
\begin{flalign}\label{eq:1FL}
 N_1\left(r,f\right)+2\bar{N_2}\left(r,f\right)&\leq N\left(r,f\right) \nonumber\\
&\leq T\left(r,f\right) &&\nonumber\\
&\leq\bar{N}\left(r,f\right)+N\left(r,\frac{1}{f}\right)+\bar{N}\left(r,\frac{1}{\psi-\phi}\right)-N^0_2\left(r,\psi\right)+S(r,f).&&
\end{flalign}
Since $\bar{N}\left(r,f\right)=N_1\left(r,f\right)+\bar{N_2}\left(r,f\right),$ from (\ref{eq:1FL}) we have  
\begin{eqnarray}\label{eq:2FL}
\bar{N_2}\left(r,f\right)\leq N\left(r,\frac{1}{f}\right)+\bar{N}\left(r,\frac{1}{\psi-\phi}\right)-N^0_2\left(r,\psi\right)+S(r,f) .
\end{eqnarray}

Using (\ref{eq:2FL}) in Lemma \ref{lemma3.1A}, we obtain 
\begin{flalign}\label{eq:3FL} 
N_1\left(r,f\right) &\leq \frac{1}{l} N\left(r,\frac{1}{f}\right)+\frac{2}{l}\bar{N}\left(r,\frac{1}{\psi-\phi}\right)-\frac{1}{l}N^0_2\left(r,\psi\right)\nonumber\\
&+\frac{1}{l}\bar{N}\left(r,\frac{1}{\psi'-\phi}\right)+ S(r,f).&&
\end{flalign}

Now from (\ref{eq:2FL}) and (\ref{eq:3FL}) it follows that  
\begin{flalign}\label{eq:4FL}
\bar{N}\left(r,f\right)&= N_1\left(r,f\right)+\bar{N_2}\left(r,f\right) \nonumber\\
&\leq N_1\left(r,f\right)+N\left(r,\frac{1}{f}\right)+\bar{N}\left(r,\frac{1}{\psi-\phi}\right)-N^0_2\left(r,\psi\right)+S(r,f) &&\nonumber\\
&\leq \left(1+\frac{1}{l} \right)N\left(r,\frac{1}{f}\right)+\left(1+\frac{2}{l}\right)\bar{N}\left(r,\frac{1}{\psi-\phi}\right)-\left(1+\frac{1}{l}\right)N^0_2\left(r,\psi\right) &&\nonumber\\
&+\frac{1}{l}\bar{N}\left(r,\phi\right)+\frac{1}{l}\bar{N}\left(r,\frac{1}{\psi'-\phi}\right)+S(r,f).&&
 \end{flalign}

Now in view of (\ref{eq:4FL}), Lemma \ref{theorem 3.2} yields 
\begin{flalign*}
T\left(r,f\right)&\leq \left(2+\frac{1}{l}\right)N\left(r,\frac{1}{f}\right)+\left(2+\frac{2}{l}\right)\bar{N}\left(r,\frac{1}{\psi-\phi}\right)\\
&+\frac{1}{l}\bar{N}\left(r,\frac{1}{\psi'-\phi}\right)-\left(2+\frac{1}{l}\right)N^0_2\left(r,\psi\right)+S(r,f),&&
\end{flalign*} 
which proves $(a)$.

 The conclusion $(b)$ follows by using Lemma \ref{22feb} instead of Lemma \ref{lemma3.1A} in the proof of $(a)$, above.
  
\end{proof}

 \textbf{Proof of Theorem \ref{MTA}: } Since $N^0_2\left(r,\psi\right)\geq 0$, by Lemma \ref{final lemmaA}$(a)$ we have
\begin{flalign}\label{eq:1}
T\left(r,f\right)&\leq 3N\left(r,\frac{1}{f}\right)+4\bar{N}\left(r,\frac{1}{\psi-\phi}\right)
 +\bar{N}\left(r,\frac{1}{\psi'-\phi}\right)+S(r,f). 
\end{flalign}

    Since $f\mbox{ and } \psi-\phi$ have no zeros,  $N\left(r,\frac{1}{f}\right)=0 \mbox{ and }\bar{N}\left(r,\frac{1}{\psi-\phi}\right)=0.$ Therefore, (\ref{eq:1})  reduces to 
\begin{eqnarray}\label{eq:4}
T\left(r,f\right)\leq \bar{N}\left(r,\frac{1}{\psi'-\phi}\right)+S(r,f). 
\end{eqnarray}
Since $f\in\mathcal{M}\left(\mathbb{C}\right)$ is transcendental, (\ref{eq:4}) implies that $\psi'(z)=\phi(z)$  has infinitely many solutions.\\
  Similarly, Lemma  \ref{final lemmaA}$(b)$ implies that $\psi'(z)=\phi'(z)$  has infinitely many solutions.   ~~~~~~~~~~~~~~~~~~~~~~~~~$\qed$


\bibliographystyle{amsplain}

\begin{thebibliography}{99}
\bibitem{charak1} K.S. Charak and J. Reipo, {\em Two normality criteria and  the converse of the Bloch's principle,} J. Math. Anal. Appl. \textbf{352}(2009), 43-48.
\bibitem{charak2} K.S. Charak and V. Singh, {\em Two normality criteria and counterexamples to the converse of Bloch's principle. }, Kodai Math. J. \textbf{38}(2015), 672-686.
\bibitem{charak3} K.S. Charak and S. Sharma, {\em Some normality criteria and a counterexample to the converse of Bloch's principle}, Bull. Austral. Math. Soc. \textbf{95(2)}(2017), 238-249.
\bibitem{cherry} William Cherry and Zhuan Ye, {\em Nevanlinna's Theory of Value Distribution  }, Springer-Verlag, Berlin Heidelberg 2001.
\bibitem{Gu} Y.X. Gu, {\em A criterion for normality of families of  meromorphic functions} (in Chinese) Sci. Sinica, Special Issue \textbf{1} on Math. (1979), 267-274.
\bibitem{Hayman-1} W.K. Hayman, {\em Meromorphic Functions }, Clarendon Press, Oxford (1964).
\bibitem{Hayman-2} W.K. Hayman, {\em Picard values of meromorphic functions and their derivatives} , Ann. of Math. \textbf{70}(1959), 9-42.
\bibitem{lahiri}   I. Lahiri, {\em A simple normality criterion 
leading to a counterexample to the converse of Bloch's principle, }New Zealand J. Math. \textbf{34}(2005), 61-65.
\bibitem{robinson} A. Robinsion, {\em Mathematical problems}, J. Symbolic Logic \textbf{38}(1973), 500-516.
\bibitem{rubel} L.A. Rubel, {\em Four counterexamples to Bloch's principle}, Proc. Amer. Math. Soc. \textbf{98}(1986), 257-260.
\bibitem{Schiff} J.L. Schiff, {\em Normal Families}, Universitext, Springer, Berlin 1993.
\bibitem{schiwick} W. Schwick, {\em On Hayman's alternative for families of meromorphic functions,} Complex Var. Theory Appl. 
\textbf{32}(1997), 51-57.


\end{thebibliography}


\end{document}